\newtheorem{thm}{Theorem}
\newtheorem{corollary}{Corollary}
\newtheorem{conjecture}{Conjecture}
\theoremstyle{definition}
\newtheorem{definition}{Definition}
\newtheorem{example}{Example}
\theoremstyle{remark}
\def\area{\operatorname{area}}
\def\ldinv{\operatorname{ldinv}}
\def\pdinv{\operatorname{pdinv}}
\def\inv{\operatorname{inv}}
\title{A symmetric function lift of torus link homology}
\author{Andy Wilson}
\address
{Department of Mathematics \newline \indent
Kennesaw State University \newline \indent
Marietta, GA, 30060, USA}
\email{awils342@kennesaw.edu}
\keywords{lattice paths, link homology, torus links, elliptic Hall algebra}
\begin{document}

\maketitle

\begin{abstract}
Suppose $M$ and $N$ are positive integers and let $k = \gcd(M, N)$, $m = M/k$, and $n=N/k$. We define a symmetric function $L_{M,N}$ as a weighted sum over certain tuples of lattice paths. We show that $L_{M,N}$ satisfies a generalization of Mellit and Hogancamp's recursion for the triply-graded Khovanov--Rozansky homology of the $M,N$-torus link. As a corollary, we obtain the triply-graded Khovanov--Rozansky homology of the $M,N$-torus link as a specialization of $L_{M,N}$. We conjecture that $L_{M,N}$ is equal (up to a constant) to the elliptic Hall algebra operator $\mathbf{Q}_{m,n}$ composed $k$ times and applied to 1. 
\end{abstract}

\section{Introduction}

For coprime positive integers $m$ and $n$, much has been discovered in recent years about the relationship between $m,n$-torus knots, the elliptic Hall algebra, and $m,n$-Dyck paths (lattice paths from $(0,0)$ to $(m,n)$ staying above the line $my = nx$). More precisely, the relevant objects are
\begin{enumerate}[(1)]
\item \label{kr} the triply-graded Khovanov--Rozansky homology of the $m,n$-torus knot,  
\item \label{sym} a certain symmetric function operator $\mathbf{Q}_{m,n}$ (defined in Section \ref{ssec:sym}) applied to 1, and
\item  \label{cat} a generating function over $m,n$-Dyck paths, weighted by variables $q$ and $t$ as well as monomials in variables $x_1, x_2, x_3 \ldots$.

\end{enumerate}
Gorsky and Negut conjectured that \eqref{sym} and \eqref{cat} are equal up to a sign \cite{gorsky-negut}. This conjecture was proved by Mellit \cite{mellit-rational}. An earlier result of Gorsky implies that \eqref{kr} appears as a certain specialization of \eqref{sym} and \eqref{cat} \cite{gorsky}. 

Somewhat less explored, but still fairly well understood, is the case where $m=n$, i.e.\ $m$ and $n$ are ``minimally coprime''. In this case, the objects are
\begin{enumerate}[(I)]
\item \label{kr_} the triply-graded Khovanov--Rozansky homology of the $n,n$-torus link (no longer a knot),
\item \label{sym_} the symmetric function $\nabla p_{1^n}$, where $\nabla$ is the Macdonald eigenoperator, and 
\item \label{cat_} a generating function over an arbitrary number of labeled boxes placed into $n$ columns \cite{torus-link-wilson}. 
\end{enumerate}
\eqref{sym_} and \eqref{cat_} are conjectured to be equal \cite{torus-link-wilson}. 
The same specialization as before allows one to move from \eqref{sym_} or \eqref{cat_} to \eqref{kr_}, making use of a recursion of Elias and Hogancamp \cite{elias-hogancamp}.

The goal of this work is to generalize both the $\gcd(m, n) = 1$ and $m=n$ cases to any positive integers $M$ and $N$. We let $k = \gcd(M, N)$, $m = M/k$, and $n=N/k$. The objects we consider are now
\begin{enumerate}[(A)]
\item \label{kr__} the triply-graded Khovanov--Rozansky homology of the $M,N$-torus link, 
\item \label{sym__} the elliptic Hall algebra operator $\mathbf{Q}_{m,n}$ applied iteratively $k$ times to 1, and
\item  \label{cat__} a generating function over $k$-tuples of (variations of) $m,n$-Dyck paths.
\end{enumerate}
The correct analog of Elias and Hogancamp's recursion for \eqref{kr__} is known due to Hogancamp and Mellit \cite{hogancamp-mellit}. 

The structure of this paper is as follows. In Section \ref{sec:background}, we give the necessary background on link homology and the elliptic Hall algebra to understand objects \eqref{kr__} and \eqref{sym__} above.  We develop the combinatorics of \eqref{cat__}, which is new work, in Section \ref{sec:combinatorics}. In Section \ref{sec:recursion}, we prove our main result, which is a recursion for our combinatorial generating functions. We close by stating our main conjecture in Section \ref{sec:conjecture}, a relationship between \eqref{sym__} and \eqref{cat__}, and explaining how the conjecture connects to previous work.

\section{Background}
\label{sec:background}

We suppose throughout the sequel that $M$ and $N$ are positive integers and let $k = \gcd(M, N)$, $m = M/k$, and $n=N/k$. In this section, we define the relevant notions from link homology theory and symmetric functions. 

\subsection{Torus link homology}
\label{ssec:links}

A \emph{link} is a subspace of $\mathbb{R}^3$ whose connected components are homeomorphic to circles. A connected link is called a  \emph{knot}. The goal of link invariants is to assign a mathematical object to each link (such as a number, a polynomial, or the homology of some chain complex) such that if two links are equivalent (usually by ambient isotopy) then the links are assigned the same object. We are particularly interested in Khovanov--Rozansky homology, which assigns a triply-graded homology to each link \cite{khovanov, bar-natan}. Khovanov--Rozansky homology generalizes many well-known link invariants such as the Alexander and Jones polynomials \cite{khovanov-icm}. Instead of computing Khovanov--Rozansky homology precisely, we will focus on computing the related trivariate generating function using variables $q$, $t$, and $a$.

We will focus on \emph{torus links}, a particular class of links. One way to depict a torus link is to begin by depicting the torus as $[0,1] \times [0,1]$ with vertical and horizontal edges identified, respectively. Given integers $M$ and $N$ with $\gcd(M, N) = k$, the $M,N$-torus link has components $My = Nx + \epsilon i$ for $i = 0$ to $k-1$ and some small value $\epsilon$, where each component wraps around the torus until it forms a knot. 

Hogancamp and Mellit derive a recursion for computing the triply graded Khovanov--Rozansky homology of any $M,N$-torus link \cite{hogancamp-mellit}. We describe their recursion below, following Gorsky, Mazin, and Vazirani's variation \cite{gorsky-mazin-vazirani}. Throughout, $v$ and $w$ are sequences in the alphabet $\{0,1,\bullet\}$. 

\begin{thm}[\cite{gorsky-mazin-vazirani, hogancamp-mellit}]
\label{thm:p_recursion}
For nonnegative integers $M$ and $N$, the triply graded Khovanov--Rozansky homology of the $M,N$-torus link is free over $\mathbb{Z}$ of graded rank $p(0^M, 0^N)$, which is an element of $\mathbb{N}[q,t^{\pm 1}, a, (1-q)^{-1}]$ computed by the following recursion:
\begin{enumerate}
\setcounter{enumi}{-1}
\item $p(\bullet^M, \bullet^N) = 1$.
\item $p(\bullet v, \bullet w) = p(v \bullet, w \bullet)$.
\item $p(0v, 0w) = (1-q)^{-1} p(v1, w1)$ if $|v| = |w| = 0$.
\item $p(0v, 0w) = t^{-\ell}p(v1, w1) + qt^{-\ell}p(v0, w0)$ if $\ell = |v| = |w| > 0$.
\item $p(1v, 0w) = p(v1, w\bullet)$.
\item $p(0v, 1w) = p(v\bullet, w1)$.
\item $p(1v, 1w) = (t^{|v|} + a) p(v\bullet, w\bullet)$.
\end{enumerate}
\end{thm}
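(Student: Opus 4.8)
This is the theorem of Hogancamp and Mellit \cite{hogancamp-mellit}, repackaged into the recursive form above by Gorsky, Mazin, and Vazirani \cite{gorsky-mazin-vazirani}; what follows is the line of argument I would take to prove it, working in the homotopy category of Soergel bimodules (or its $y$-ified enhancement). The plan is to attach to each pair of words $(v,w)$ a complex of bimodules $C(v,w)$ built from a ``partially closed'' braid diagram: a $\bullet$ marks a strand that has already been traced off, while $0$ and $1$ mark the two kinds of elementary pieces out of which the braid presenting the $M,N$-torus link is assembled. One then defines $p(v,w)$ to be the normalized Poincaré series in $q$ (internal grading), $t$ (homological grading) and $a$ (Hochschild grading) of the triply-graded homology of $C(v,w)$, and the task is to verify that this series obeys relations (0)--(6).

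First I would dispatch the formal moves. Relation (0) is the normalization $C(\bullet^M,\bullet^N) = $ the monoidal unit, whose Poincaré series is $1$. Relation (1) is cyclicity of (Hochschild) homology: sliding a closed strand around the diagram is a Markov/conjugation move and changes nothing. Relations (4) and (5) are Reidemeister-II-type slides in the Soergel category, letting a $1$ slide past a $0$ while closing the $0$ off; these are realized by explicit homotopy equivalences, so the graded ranks agree. Relation (6) is the first genuine computation: two adjacent strands that both ``turn'' contribute the Rouquier complex of a single doubled crossing, and taking its Hochschild homology (equivalently, closing up a circle) produces the scalar $t^{|v|}+a$ --- the monomial $t^{|v|}$ recording the $HH_0$ contribution with its grading shift from the $|v|$ strands the crossing slides past, and the $+a$ being the $HH_1$ contribution of a closed circle.

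The hard part is relations (2) and (3), the ``row'' moves that create a new active strand out of a leading $0$ on each side; this is where the recursive self-similarity of the full twist enters. The input is the decomposition of the relevant categorified idempotent --- a Jones--Wenzl-type projector, or in Hogancamp--Mellit's setup a projector assembled by $y$-ification --- into a ``$q$-weighted part'' and a complementary part. Concretely, I would extract from this decomposition a distinguished triangle relating $C(0v,0w)$ to $C(v1,w1)$ and a $q$-shifted copy of $C(v0,w0)$, check that it splits after passing to homology (using freeness of the homologies), and read off relation (3), with the global $t^{-\ell}$ absorbing the homological shift the projector introduces when $\ell=|v|=|w|>0$ strands remain. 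Relation (2) is the $\ell=0$ endpoint of this process: with no other active strands the decomposition iterates indefinitely, and summing the resulting geometric series $1+q+q^2+\cdots$ yields the factor $(1-q)^{-1}$.

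Finally I would check the base of the scheme: for $v=w=\varnothing$ and the standard reading of the $M,N$-torus braid, $C(0^M,0^N)$ is the Rouquier complex whose Hochschild homology is the Khovanov--Rozansky homology of the $M,N$-torus link, so $p(0^M,0^N)$ is its Poincaré series; freeness over $\mathbb{Z}$ is inherited from the Soergel-bimodule side, and membership in $\mathbb{N}[q,t^{\pm 1},a,(1-q)^{-1}]$ is then visible directly from the recursion. I expect relations (2)--(3) --- pinning down the idempotent decomposition and its grading shifts precisely enough to recover the exact coefficients, and verifying the splitting --- to be the main obstacle; the remaining relations are bookkeeping with standard moves in the Soergel category.
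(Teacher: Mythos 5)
The paper offers no proof of Theorem \ref{thm:p_recursion} to compare against: it is imported verbatim (in Gorsky--Mazin--Vazirani's form) from \cite{gorsky-mazin-vazirani, hogancamp-mellit}, and the present paper only uses it as a black box. So your proposal has to be judged on its own terms, and on those terms it is a roadmap of the Hogancamp--Mellit strategy rather than a proof. The first gap is that the objects $C(v,w)$ are never actually constructed: ``a partially closed braid diagram with $\bullet$, $0$, $1$ markings'' does not explain how a pair of words of different lengths $M$ and $N$ indexes a complex, nor why the moves are asymmetric in exactly the stated way (e.g.\ why $(1v,0w)$ passes to $(v1,w\bullet)$ while $(0v,1w)$ passes to $(v\bullet,w1)$, and why no $q$, $t$, or $a$ factors appear there while $(1v,1w)$ produces $t^{|v|}+a$). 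Pinning down this two-word bookkeeping, the dg model in which the partial closures live, and the grading normalizations that make the exact coefficients come out is the bulk of the content of \cite{hogancamp-mellit}; deferring it defers the theorem.

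The second, more structural gap is in your treatment of relations (2) and (3). You propose to extract a distinguished triangle relating $C(0v,0w)$ to $C(v1,w1)$ and a $q$-shifted $C(v0,w0)$ and then ``check that it splits after passing to homology (using freeness of the homologies)'' --- but freeness, and positivity (membership of $p$ in $\mathbb{N}[q,t^{\pm1},a,(1-q)^{-1}]$), are part of the conclusion, so as stated the splitting argument is circular. What actually makes the long exact sequences degenerate in Hogancamp--Mellit is a parity statement (homology concentrated in degrees of one parity), which must be proved by the same induction that proves the recursion; your sketch does not set up that simultaneous induction. Relatedly, the $\ell=0$ case (2) is not a formal summation of a geometric series: one must show the infinite iteration converges to a well-defined projector whose homology has the claimed graded rank, which is again a nontrivial limit argument, not bookkeeping. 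Until these points are supplied, the proposal identifies the right references and the right general strategy but does not constitute a proof of the recursion.
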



Hogancamp and Mellit's original recursion can be obtained by removing all $\bullet$'s from $v$ and $w$. Our main result (Theorem \ref{thm:L_recursion}) is a lift of Theorem \ref{thm:p_recursion} to the level of symmetric functions. 

\subsection{Symmetric functions}
\label{ssec:sym}

We let $\Lambda$ denote the ring of symmetric functions in variables $x_1, x_2, x_3, \ldots$ over the ground field $\mathbb{Q}(q, t, a, z)$. We use traditional notation for the usual bases for $\Lambda$ such as the power sum, elementary, monomial, Schur, and Macdonald symmetric functions \cite{macdonald}.We will often employ \emph{plethystic substitution}, i.e.\ for any formal sum of Laurent monomials $A = a_1 + a_2 + a_3 + \ldots$ from the $x_i$'s or the ground field, we define
$$p_k[A] = a_1^k + a_2^k + a_3^k + \ldots$$
for any power sum polynomial $p_k$. We extend plethystic substitution to all of $\Lambda$ by viewing  the $p_i$'s as algebraically independent generators for $\Lambda$ \cite{plethysm}. We also let $$X = x_1 + x_2 + x_3 + \ldots.$$ Our first definition is an important operator on $\Lambda$ in the study of Macdonald polynomials. 

\begin{definition}
For any nonnegative integer $k$ and any symmetric function $f$, 
$$\mathbf{D}_k f[X] = \left. f[X + (1-q)(1-t)/z] \sum_{i \geq 0} (-z)^i e_i[X] \right|_{z^k} $$
where $\left. \right|_{z^k}$ extracts the coefficient of $z^k$ from the series to its left.
\end{definition}


\noindent
Next, we define the fundamental operators for the elliptic Hall algebra. 

\begin{definition}
For nonnegative integers $m$ and $n$ and $f \in \Lambda$, we define $\mathbf{Q}_{m, n} f$ by first setting
\begin{align*}
\mathbf{Q}_{0, n} f &= \frac{qt}{qt-1} h_n[(1-qt)X/(qt)] \cdot f \\
\mathbf{Q}_{1, n} f &= \mathbf{D}_n f .
\end{align*}
Otherwise, $m \geq 2$. We assume $m$ and $n$ are coprime, so there are unique integers $1 \leq a < m$ and $1 \leq b < n$ such that $na - mb = 1$. Then we let
$$\mathbf{Q}_{m, n} f = \frac{1}{(1-q)(1-t)} [ \mathbf{Q}_{m-a, n-b}, \mathbf{Q}_{a, b} ] f.$$
\end{definition}

$\mathbf{Q}_{m,n}$ is also defined when $m$ and $n$ are not coprime, but we will not need this level of generality \cite{rational-shuffle}. $\mathbf{Q}_{m,n}$ applied to 1 appears in the Rational Shuffle Theorem \cite{rational-shuffle}. Our next goal is to develop a conjectured combinatorial formula for $\mathbf{Q}_{m,n}^{k}(1).$ In order to state our formula, we will use the following operators
defined by Carlsson and Mellit in their proof of the Shuffle Theorem.

\begin{definition}
For any integer $\ell \geq 0$, we let
$$V_{\ell} = \mathbb{Q}[y_1, y_2, \ldots, y_{\ell}] \otimes \Lambda.$$
Following Carlsson and Mellit \cite{carlsson-mellit}, we define operators
\begin{align*}
d_{+} &: V_{\ell} \to V_{\ell + 1} \qquad (\ell \geq 0) \\
d_{-} &: V_{\ell} \to V_{\ell - 1} \qquad (\ell \geq 1)
\end{align*} by 
\begin{align*}
d_+ f &= T_1 T_2 \ldots T_{\ell} f[X + (t-1)y_{\ell+1}] \\
d_- f &= \left. -y_{\ell} f[X - (t-1)y_{\ell}] \sum_{i \geq 0} h_i[-X/y_{\ell}] \right|_{y_{\ell}^0}
\end{align*}
where
$$T_i f = \frac{(t-1) y_i f + (y_{i+1} - ty_i) s_i f}{y_{i+1} - y_i}$$
and $s_i$ swaps $y_i$ and $y_{i+1}$.
We define a third operator, 
$$d_{=} : V_{\ell} \to V_{\ell} \qquad (\ell \geq 1)$$
which also appears in Carlsson and Mellit's work but not by this name. It acts by 
$$d_{=} f = \frac{1}{t-1} \left(d_- d_+ f - d_+ d_- f \right).$$ 
\end{definition}

\noindent
Now we are ready to describe the relevant combinatorial objects.

\section{Combinatorial objects}
\label{sec:combinatorics}

Again, for positive integers $M$ and $N$ we let $k = \gcd(M, N)$, $m=M/k$, and $n=N/k$. We will define a generating function as a sum over certain $k$-tuples of lattice paths. These lattice paths will depend on a pair of sequences $v$ and $w$ of lengths $M$ and $N$, respectively, in the alphabet $\{0, 1, \bullet\}$. Furthermore, we let
$$|v| = \text{the number of 1's in $v$} $$
and insist that $|v| = |w|$. We will denote the resulting generating function by $L(v, w)$. 

\subsection{Path tuples}
\label{ssec:tuples}

\begin{definition}
An $m,n$-\emph{path}, or just a \emph{path},  is a sequence of $n$ unit-length north and $m$ unit-length east steps from $(a,0)$ to $(a+m, n)$ for some integer $a$ that
\begin{itemize}
\item begins with a north step, and
\item stays weakly above the line $my = nx.$
\end{itemize}
\end{definition}

We can imagine repeating the sequence of steps in a path to get an infinite path in the plane. Any height-$n$ ``band'' (region between $y = b$ and $y = b + n$ for some integer $b$) of the infinite path determines the original path. We will most often work with the $b=0$ band. Next, we define a labeling of the unit squares in each of $k$ ``sheets'' of $\left(\mathbb{Z}^2\right)^k$. 

\begin{definition}
We consider $\left(\mathbb{Z}^2\right)^k$ as $k$ \emph{sheets} of $\mathbb{Z}^2$, where the sheets are indexed from $0$ to $k-1$. 
The \emph{content} of a lattice square (or ``cell'') in the $i^{\text{th}}$ sheet, where $0 \leq i < k$, is
$$i + My - Nx$$
where $(x, y)$ are the coordinates of the lower right lattice point of the square.
\end{definition}

We depict the contents of some cells in Figure \ref{fig:paths}. Content provides a bijective correspondence between the cells in any band of $\left(\mathbb{Z}^2\right)^k$ and $\mathbb{Z}$. As a result, given a fixed band, we can refer to a cell by its content. In the sequel, we often use the phrase ``cell $c$'' to refer to the unique cell in the current band with content equal to $c$. 

\begin{definition}
For a fixed $k$-tuple of paths $\bm{P}$ in sheets $0, 1, \ldots, k-1$ of $\mathbb{Z}^2$, every cell $c \in \mathbb{Z}$ is either
\begin{itemize}
\item \emph{above $\bm{P}$}, i.e.\ the north step of $P$ in $c$'s row is to the right of $c$ and the east step of $P$ in $c$'s column is below $c$, or
\item \emph{below $\bm{P}$}.
\end{itemize}
Of the cells $c$ below $\bm{P}$, a cell $c$
\begin{itemize}
\item \emph{has a north step in $\bm{P}$} if there is a north step of $P$ on $c$'s left boundary and
\item \emph{has an east step in $\bm{P}$} if an east step of $P$ is on $c$'s upper boundary.
\end{itemize}
If $c$ is below $\bm{P}$ but it does not have a north step nor an east step in $\bm{P}$, then $c$ is \emph{strictly below $\bm{P}$}.
\end{definition}

Next, we use sequences $v$ and $w$ in the alphabet $\{0, 1, \bullet\}$ of lengths $M$ and $N$, respectively, such that 
$|v| = |w|$,
to restrict the set of paths we consider. These sequences $v$ and $w$ govern the relative location of the paths and the cells $0, 1, \ldots, M-1$  and $0, 1, \ldots, N-1$, respectively. 

\begin{figure}
\centering
\begin{tikzpicture}[scale=0.5]
\def\M{6}
\def\N{4}
\def\m{3}
\def\n{2}
\def\i{0}
\draw[gray, <->] (-3,0) -- (3,0);
\draw[gray, <->] (0,-1) -- (0,\n+1);
\draw[gray, dashed] (0,0) -- (\m, \n);
\draw[gray, dashed] (-3,2) -- (3,2);

\foreach \x in {-3,...,3} {
	\foreach \y in {0,1,...,\n} {
		\fill[color=gray] (\x,\y) circle (0.1);
        }
    }
\foreach \x in {-2,...,2} {
	\foreach \y in {1,...,\n} {
		\pgfmathsetmacro\c{int(\M*(\y-1) - \N*(\x)+\i)}
		\node at (\x-0.5, \y-0.5) {\small$\c$};
        }
    }
\draw[very thick, blue] (-2,1) -- (-1,1) -- (-1,2) -- (0,2);
\draw[very thick, blue, dashed] (-2,0) -- (-2,1);
\draw[very thick, blue, dashed] (1,2) -- (0,2);

\end{tikzpicture}
\hspace{5pt}
\begin{tikzpicture}[scale=0.5]
\def\M{6}
\def\N{4}
\def\m{3}
\def\n{2}
\def\i{1}
\draw[gray, <->] (-3,0) -- (3,0);
\draw[gray, <->] (0,-1) -- (0,\n+1);
\draw[gray, dashed] (0,0) -- (\m, \n);
\draw[gray, dashed] (-3,2) -- (3,2);

\foreach \x in {-3,...,3} {
	\foreach \y in {0,1,...,\n} {
		\fill[color=gray] (\x,\y) circle (0.1);
        }
    }
\foreach \x in {-2,...,2} {
	\foreach \y in {1,...,\n} {
		\pgfmathsetmacro\c{int(\M*(\y-1) - \N*(\x)+\i)}
		\node at (\x-0.5, \y-0.5) {\small$\c$};
        }
    }
\draw[very thick, blue] (-3,0) -- (-3,1) -- (-1,1);
\draw[very thick, blue, dashed] (-1,1) -- (0,1);
\draw[very thick, blue, dashed] (0,1) -- (0,2);
\end{tikzpicture}
\\
\begin{tikzpicture}[scale=0.5]
\def\M{6}
\def\N{4}
\def\m{3}
\def\n{2}
\def\i{0}
\draw[gray, <->] (-3,0) -- (3,0);
\draw[gray, <->] (0,-1) -- (0,\n+1);
\draw[gray, dashed] (0,0) -- (\m, \n);
\draw[gray, dashed] (-3,2) -- (3,2);

\foreach \x in {-3,...,3} {
	\foreach \y in {0,1,...,\n} {
		\fill[color=gray] (\x,\y) circle (0.1);
        }
    }
\foreach \x in {-2,...,2} {
	\foreach \y in {1,...,\n} {
		\pgfmathsetmacro\c{int(\M*(\y-1) - \N*(\x)+\i)}
		\node at (\x-0.5, \y-0.5) {\small$\c$};
        }
    }
\draw[very thick, red] (-2,1) -- (-2,2) -- (0,2);
\draw[very thick, red, dashed] (-2,0) -- (-2,1);
\draw[very thick, red, dashed] (1,2) -- (0,2);
\end{tikzpicture}
\hspace{5pt}
\begin{tikzpicture}[scale=0.5]
\def\M{6}
\def\N{4}
\def\m{3}
\def\n{2}
\def\i{1}
\draw[gray, <->] (-3,0) -- (3,0);
\draw[gray, <->] (0,-1) -- (0,\n+1);
\draw[gray, dashed] (0,0) -- (\m, \n);
\draw[gray, dashed] (-3,2) -- (3,2);

\foreach \x in {-3,...,3} {
	\foreach \y in {0,1,...,\n} {
		\fill[color=gray] (\x,\y) circle (0.1);
        }
    }
\foreach \x in {-2,...,2} {
	\foreach \y in {1,...,\n} {
		\pgfmathsetmacro\c{int(\M*(\y-1) - \N*(\x)+\i)}
		\node at (\x-0.5, \y-0.5) {\small$\c$};
        }
    }
\draw[very thick, red] (-3,0) -- (-3,1) -- (-1,1);
\draw[very thick, red, dashed] (-1,1) -- (0,1);
\draw[very thick, red, dashed] (0,1) -- (0,2);
\end{tikzpicture}
%
%
\caption{We have drawn the only two possible path tuples for  $v = 000110$ and $w=0110$, one in blue and one in red. The areas of the path tuples are 0 and 1, respectively. Both tuples have 4 path diagonal inversions.
}
\label{fig:paths}
\end{figure}
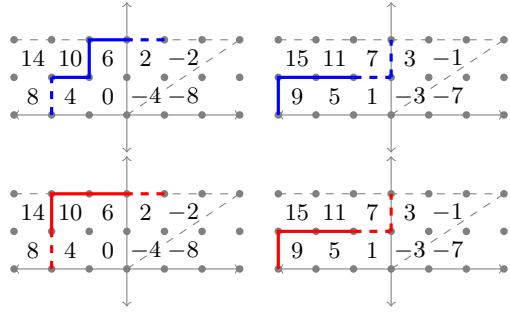

\begin{definition}
A $v,w$-\emph{path tuple} (or just a \emph{path tuple}) is a $k$-tuple of $m,n$-paths $\bm{P} = (P^{(0)}, P^{(1)}, \ldots, P^{(k-1)})$, one in each sheet, such that, for $v = v_0 \ldots v_{M-1}$,
\begin{enumerate}[(i)]
\item $v_i = 0$ if and only if cell $i$ is strictly below $\bm{P}$
\item \label{basement1} $v_i = 1$ if and only if cell $i$ is a north step of $\bm{P}$, and
\item $v_i = \bullet$ if and only if cell $i$ is above $\bm{P}$.
\end{enumerate}
and, for $w = w_0 \ldots w_{N-1}$, 
\begin{enumerate}[(i')]
\item $w_i = 0$ if and only if cell $i$ is strictly below $\bm{P}$
\item \label{basement2} $w_i = 1$ if and only if cell $i$ is an east step of $\bm{P}$, and
\item $w_i = \bullet$ if and only if cell $i$ is above $\bm{P}$.
\end{enumerate}
\end{definition}

North steps in cells $0 \leq c < M$ and east steps in cells $0 \leq c < N$ are called \emph{basement steps} and denoted with dashed lines in our figures.
As a sanity check, we note that, for any $M$ and $N$, the restriction that paths are weakly above the line $my = nx$ implies that there is exactly one path tuple if $v = \bullet^M$ and $w = \bullet^N$. On the other hand, if $v=0^M$ and $w=0^N$,
there are infinitely many $v,w$-path tuples. Figure \ref{fig:paths} contains two examples of path tuples.

\subsection{Invariant sets}
\label{ssec:invariant}

There is a class of objects known as \emph{$M,N$-invariant sets}, or just \emph{invariant sets}, that is in bijection with path tuples. These objects appear in work of Gorsky, Mazin, and Vazirani \cite{gorsky-mazin-vazirani}. 

\begin{definition}
An \emph{$M,N$-invariant set} is a set $\Delta \subset \mathbb{Z}_{\geq 0}$ with finite complement in $\mathbb{Z}_{\geq 0}$ such that, for every $i \in \Delta$, $i + M \in \Delta$ and $i + N \in \Delta$. We let $I_{M, N}$ denote the collection of all $M,N$-invariant sets.
\end{definition}

Given any $k$-tuple of $m,n$-paths $\bm{P}$, there is a natural partner invariant set $\Delta$ given by 
$$  i \in \Delta \Longleftrightarrow i \text{ is above } \bm{P}.$$
In fact, this is a bijection, since $\bm{P}$ can be recovered from $\Delta$. If we wish to consider $v,w$-path tuples $\bm{P}$, we get information about the intersections $\Delta \cap \{0,1,\ldots,M-1\}$ and $\Delta \cap \{0,1,\ldots,N-1\}$. 

\begin{definition}
An invariant set $\Delta$ \emph{fits} sequences $v$ and $w$ if its corresponding $k$-tuple of $m,n$-paths $\bm{P}$ is a $v,w$-path.
\end{definition}

We will occasionally use this identification with invariant sets in our proofs, although we will primarily work with path tuples. Next, we define statistics for path tuples.  

\subsection{Path tuple statistics}
\label{ssec:stats}

\begin{definition}
The \emph{area} of a path tuple $\bm{P}$ is the number of cells $c \geq M+N$ that are below $\bm{P}$. 
\end{definition}

\noindent
Next, we define a notion of diagonal inversion for a path.

\begin{definition}
A \emph{path diagonal inversion} in a path tuple $\bm{P}$ is a pair of cells $c < d$ with $c \geq 0$ and  $M \leq d < c + M$ such that 
\begin{itemize}
\item $c$ has a north step in $\bm{P}$ and
\item $d$ is (weakly) below $\bm{P}$. 
\end{itemize}
$c$ and $d$ may be in different sheets and the north step mentioned above can be a basement step. We let $\pdinv(\bm{P})$ denote the number of path diagonal inversions in a path tuple $\bm{P}$. 
\end{definition}

\subsection{The characteristic function of a path tuple}
\label{ssec:schroeder}

Given a path tuple $\bm{P}$, we describe how to obtain a sequence of $d_{+}$,  $d_{-}$, and $d_=$ operators that, when applied to 1, allow us to define $L(v,w)$. We also note that, in this subsection, we will break the $M,N$-symmetry that has existed thus far. 

\begin{definition}
A \emph{partial Schr\"oder path} is a lattice path from $(0,\ell)$ to $(N,N$) consisting of steps $(+1,0)$, $(0,+1)$ and $(+1,+1)$ that remains weakly above the line $y = x$ and does not contain any diagonal steps on the line $y=x$.
\end{definition}

\begin{definition}
\label{def:schroeder}
Given a path tuple $\bm{P}$, suppose $c_1 < c_2 < \ldots < c_N$ are the cells containing north steps in $\bm{P}$ and that $c_1, \ldots, c_{\ell}$ are the basement steps. Place the values $c_1, c_2, \ldots, c_N$ in the first $N$ cells on the line $y=x$ from bottom left to top right. Let $S$ be the (unique) partial Schr\"oder path from $(0,\ell)$ to $(N, N)$ such that 
\begin{enumerate}[(i)]
\item if $c_i < c_j < c_i + M$, then the entire (unique) square above $c_i$ and to the left of $c_j$ is below $S$,
\item if $c_i + M = c_j$, then the square above $c_i$ and to the left of $c_j$ contains a diagonal step,
\item otherwise, the square above $c_i$ and to the left of $c_j$ is above $S$.
\end{enumerate}
We call $S$ the \emph{partial Schr\"oder path} of $\bm{P}$. 
\end{definition} 

%
%
%
%
%
%

\begin{definition}
Given the partial Schr\"oder path $S$ of a path tuple $\bm{P}$, we begin with 1 and, reading $S$ from right to left, iteratively apply
\begin{itemize}
\item $d_+$ for any horizontal step,
\item $d_-$ for any vertical step, and 
\item $d_=$ for any diagonal step.
\end{itemize}
The resulting element of $V_{\ell}$ is the \emph{characteristic function} of $\bm{P}$, written $\chi(\bm{P})$. 
\end{definition}

The partial Schr\"oder paths and characteristic functions for the path tuples in Figure \ref{fig:paths} are depicted in Figure \ref{fig:schroeder}.
We close this subsection by defining our main objects of study.

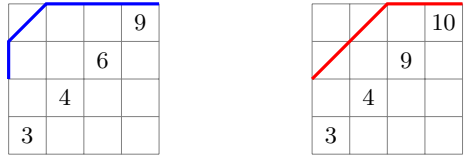
\begin{figure}
\begin{center}
\begin{tikzpicture}[scale=0.5]
\draw[gray, very thin] (0,0) grid (4,4);

\node at (0.5,0.5) {\small 3};
\node at (1.5,1.5) {\small 4};
\node at (2.5,2.5) {\small 6};
\node at (3.5,3.5) {\small 9};

\draw[very thick, blue] (0,2) -- (0,3) -- (1,4) -- (4,4);

\end{tikzpicture}
\hspace{50pt}
\begin{tikzpicture}[scale=0.5]
\draw[gray, very thin] (0,0) grid (4,4);

\node at (0.5,0.5) {\small 3};
\node at (1.5,1.5) {\small 4};
\node at (2.5,2.5) {\small 9};
\node at (3.5,3.5) {\small 10};

\draw[very thick, red] (0,2) -- (2,4) -- (4,4);

\end{tikzpicture}
\end{center}
\caption{This figure contains the partial Schr\"oder paths of the two path tuples from Figure \ref{fig:paths}. Their corresponding characteristic functions are $d_- d_= d_+ d_+ d_+(1)$ and $d_= d_= d_+ d_+ (1)$, both of which are in $V_2$.}
\label{fig:schroeder}
\end{figure}

\begin{definition}
Given sequences $v$ and $w$ of lengths $M$ and $N$, respectively, and $|v| = |w|$, we let
$$L(v, w)= \sum_{\bm{P}} t^{-\pdinv(\bm{P})} q^{\area(\bm{P})} \chi(\bm{P}) $$
where the sum is over all $v,w$-path tuples $\bm{P}$. 
\end{definition}

\begin{example}
To complete this subsection, we compute $L(v, w)$ in its entirety for $v = 000110$ and $w=0110$. There are two path tuples in this case, which appear in Figure \ref{fig:paths}. As mentioned in Figure \ref{fig:paths}, the areas of the path tuples are 0 and 1, respectively, and both tuples have 4 path diagonal inversions. Figure \ref{fig:schroeder} depicts the partial Schr\"oder paths for these path tuples. Assembling this information, and evaluating the corresponding sequences of Carlsson--Mellit operators, we get
\begin{align*}
L(000110, 0110) &= t^{-4} d_- d_= d_+ d_+ d_+(1) + t^{-4} q d_= d_= d_+ d_+ (1) \\ 
 &= -t^{-2} y_1 s_1 + t^{-3} q y_1 y_2 \in V_2.
\end{align*}
\end{example}

\subsection{Path labelings when $v=0^M$ and $w = 0^N$}

We can simplify the previous definitions for $L(v, w)$ when $v = 0^M$ and $w = 0^N$. As a result, we get an expression for $L(v,w)$ that does not explicitly use Carlsson--Mellit operators. We can assign labels to north steps in our paths in a manner reminiscent of parking functions. 

\begin{definition}
A \emph{labeled path tuple} is a $k$-tuple of paths $\bm{P}$ along with a function $f$ from the north steps of $\bm{P}$ to the positive integers such that, if $c + M = d$, then $f(c) < f(d)$. 
\end{definition}

We define a notion of diagonal inversions for labeled path tuples.

\begin{definition}
Given a labeled path tuple $(\bm{P}, f)$, a \emph{labeled diagonal inversion} is a pair of cells $c$ and $d$ with north steps such that $c < d < c + M$ and $f(c) < f(d)$. (Again, $c$ and $d$ may be in different sheets.) We let $\ldinv(\bm{P}, f)$ be the number of labeled diagonal inversions in the labeled path tuple $(\bm{P}, f)$.
\end{definition}

\begin{definition}
We let
$$L_{M,N} = \sum_{(\bm{P}, f)} t^{\ldinv(\bm{P}, f) - \pdinv(\bm{P})} q^{\area(\bm{P})}  \prod_{i > 0} x_i^{f^{-1}(i)} $$
where the sum is over all labeled path tuples $(\bm{P}, f)$.
 \end{definition}

\noindent
The following theorem comes directly from the work of Carlsson and Mellit \cite{carlsson-mellit}.

\begin{thm}
\label{thm:L_M_N}
For positive integers $M$ and $N$, 
$$L_{M,N} = L(0^M, 0^N).$$
\end{thm}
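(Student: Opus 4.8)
The plan is to show that the definition of $L_{M,N}$ (a sum over labeled path tuples with a $\ldinv - \pdinv$ exponent on $t$ and monomials in the $x_i$'s) matches $L(0^M,0^N)$ (a sum over unlabeled path tuples, each weighted by $t^{-\pdinv} q^{\area}$ times the characteristic function $\chi(\bm{P}) \in V_\ell$ obtained from the partial Schr\"oder path). The key observation is that when $v = 0^M$ and $w = 0^N$, every one of the $M+N$ cells $0,1,\ldots,\max(M,N)-1$ is either strictly below $\bm{P}$ or a basement step, and in particular \emph{all} $N$ north steps of $\bm{P}$ are basement steps, so $\ell = N$ and the partial Schr\"oder path runs from $(0,N)$ to $(N,N)$. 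Such a path is purely a sequence of horizontal and diagonal steps (it can contain no vertical steps, since it starts and ends at height $N$ and stays weakly above $y=x$), so $\chi(\bm{P})$ is a word in $d_+$ and $d_=$ only, applied to $1 \in V_0$, landing in $V_N$.

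First I would recall precisely which fact of Carlsson and Mellit is being invoked. In their proof of the Shuffle Theorem, Carlsson and Mellit express $\nabla^{?}$-type symmetric functions — and more generally the relevant polynomial-valued generating functions — as sums over Dyck-path-like data, where reading a path and applying $d_+$ for each step of one type and $d_=$ (or $d_-$) for each step of another produces exactly the parking-function generating function with statistics $\dinv$ and $\area$. Concretely, I would cite their identity that a composition $d_+$'s and $d_=$'s applied to $1$ equals a sum over fillings, where each application of $d_+$ introduces a new variable $y_{\ell+1}$ (eventually specialized via the $y$-to-$x$ dictionary to contribute a factor tracking a new labeled north step) and each $d_=$ contributes the $T_i$-operator combinatorics that exactly encodes the $\ldinv$ comparisons among labels at the same diagonal level. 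The bookkeeping is: a horizontal step of the partial Schr\"oder path corresponds to a north step of $\bm{P}$ that starts a new diagonal (content class mod the relevant shift), a diagonal step corresponds to a north step $c$ with $c + M$ also a north step — i.e.\ the pairs governing both the labeling constraint $f(c) < f(d)$ and the $\pdinv$/$\ldinv$ contributions.

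The key steps, in order, would be: (1) show $\ell = N$ and classify the partial Schr\"oder paths of $0^M,0^N$-path tuples as exactly the lattice paths from $(0,N)$ to $(N,N)$ with only horizontal and diagonal steps, staying weakly above $y=x$, with no diagonal step on $y=x$ — and check these are in bijection with the relevant "area sequences" indexing $m,n$-path tuples up to the choice of which cells beyond $M+N$ are below $\bm{P}$ (the $\area$ statistic); (2) unwind the definition of $\chi(\bm{P})$ for such a word and apply the Carlsson–Mellit combinatorial expansion of $d_+/d_=$ words to rewrite $\chi(\bm{P})$ as a sum over all ways of attaching positive-integer labels $f$ to the north steps compatible with the $f(c) < f(c+M)$ constraints, weighted by $t^{\ldinv(\bm{P},f)} \prod x_i^{f^{-1}(i)}$ up to a global power of $t$; (3) check that the global power of $t$ and the $q^{\area}$ factor assemble correctly, so that summing $t^{-\pdinv(\bm{P})} q^{\area(\bm{P})} \chi(\bm{P})$ over unlabeled tuples equals summing $t^{\ldinv(\bm{P},f) - \pdinv(\bm{P})} q^{\area(\bm{P})} \prod x_i^{f^{-1}(i)}$ over labeled tuples. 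The main obstacle I anticipate is step (2): making the dictionary between the Carlsson–Mellit operator combinatorics and this paper's specific $\pdinv/\ldinv/\area$ statistics completely precise, since Carlsson and Mellit work with ordinary Dyck paths (or rational Dyck paths) while here the path tuples live on $k$ sheets with a content function twisting everything — one must verify that the $k$-sheeted structure does not introduce any new label interactions beyond those already captured by the single partial Schr\"oder path, and that the "no diagonal step on $y=x$" condition precisely matches the requirement coming from $\ell = N$ and the parking-function normalization. Once the dictionary is pinned down, the theorem should follow by matching terms.
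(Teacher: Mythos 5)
Your overall plan---reduce to Carlsson and Mellit's expansion of words in $d_+,d_-,d_=$ applied to $1$ as weighted sums over labelings---is in the same spirit as the paper, which offers no written argument at all and simply asserts that the theorem ``comes directly from the work of Carlsson and Mellit.'' However, your step (1) contains a concrete error that derails the rest of the outline. When $v=0^M$ and $w=0^N$, the condition $v_i=0$ means that cell $i$ is \emph{strictly below} $\bm{P}$, i.e.\ it carries no north step; likewise $w_i=0$ forbids east steps in cells $0,\dots,N-1$. Hence there are \emph{no} basement steps at all, so $\ell=0$, not $\ell=N$: none of the $N$ north-step cells $c_1<\dots<c_N$ lies in $\{0,\dots,M-1\}$. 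Consequently the partial Schr\"oder path of such a tuple runs from $(0,0)$ to $(N,N)$, it genuinely contains vertical steps (your claim that it has none, because it ``starts and ends at height $N$,'' is based on the wrong starting height), and $\chi(\bm{P})$ is a word in all three operators $d_+,d_-,d_=$ landing in $V_0=\Lambda$. This is not a cosmetic point: the fact that $\ell=0$ is exactly what makes $L(0^M,0^N)$ a symmetric function with no $y$-variables, so that an identity with $L_{M,N}$ can even be stated; if $\chi(\bm{P})$ lived in $V_N$ as you assert, the two sides would not lie in the same space without some unstated specialization of the $y_i$'s.

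Because of this, the dictionary you sketch in step (2) is also miscalibrated. In Carlsson--Mellit's formalism the positive-integer labels (equivalently the $x$-monomials) are produced when $d_-$ closes off a $y$-variable---the plethystic expansion $\sum_i h_i[-X/y_\ell]$ with extraction of $y_\ell^0$ is what converts $y$-content into symmetric-function content---so a word with no $d_-$'s could never generate the monomials $\prod_i x_i^{f^{-1}(i)}$ appearing in $L_{M,N}$. The correct correspondence is: each north step of $\bm{P}$ contributes one $d_+$ and one $d_-$ (matching the vertical and horizontal steps of the Schr\"oder path), except that a pair of north steps in cells $c$ and $c+M$ is recorded by a diagonal step/$d_=$, which is what enforces the labeling constraint $f(c)<f(c+M)$; the cells below the Schr\"oder path (pairs $c_i<c_j<c_i+M$) track the $\ldinv$ comparisons. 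If you want to write a genuine proof rather than defer to the citation as the paper does, you would need to redo step (1) with $\ell=0$, and then verify the power-of-$t$ bookkeeping in step (3) against the $t^{-\pdinv(\bm{P})}$ prefactor; as written, the outline would not compile into a correct argument.
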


While it is theoretically possible to unwind the definitions of the Carlsson--Mellit operators to give a label-based expression for $L(v, w)$ for any $v$ and $w$, the resulting expression is quite technical and not helpful in what we aim to achieve here. Understanding this expression may be a worthwhile endeavor in the future.

\section{A recursion for $L(v,w)$}
\label{sec:recursion}

Now we are able to prove our main result, a recursion for $L(v,w)$, which contains the rank generating function of the Khovanov-Rozansky torus link homology of any torus link as a specialization.

\begin{thm}
\label{thm:L_recursion}
Let $v$ and $w$ be sequences in the alphabet $\{0,1,\bullet\}$. Assume in each case that both indexing sequences have the same number of 1's. We can compute $L(v, w)$ via the following recursion:
\begin{enumerate}
\setcounter{enumi}{-1}
\item \label{initial} $L(\bullet^M, \bullet^N) = 1$.
\item \label{bullets} $L(\bullet v, \bullet w) = L(v \bullet, w \bullet)$.
\item \label{all0} $L(0v, 0w) = (1-q)^{-1} d_- L(v1, w1)$ if $|v| = |w| = 0$.
\item \label{00} $L(0v, 0w) = t^{-\ell}d_{-} L(v1, w1) + qt^{-\ell}L(v0, w0)$ if $\ell = |v| = |w| > 0$.
\item \label{10}  $L(1v, 0w) = t^{-\ell} d_= L(v1, w\bullet)$ if $\ell = |v| = |w| -1$.
\item \label{01} $L(0v, 1w) = L(v\bullet, w1)$.
\item \label{11} $L(1v, 1w) = d_+ L(v\bullet, w\bullet)$
\end{enumerate}

\end{thm}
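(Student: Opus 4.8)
The plan is to prove the seven identities one at a time, in each case building a correspondence between the $v,w$-path tuples summed on the left-hand side and the path tuples summed on the right, and then checking how the correspondence acts on the three ingredients of a summand: $q^{\area(\bm P)}$, $t^{-\pdinv(\bm P)}$, and $\chi(\bm P)$. The first two are purely combinatorial and are controlled exactly as in the proof of Theorem~\ref{thm:p_recursion} in \cite{gorsky-mazin-vazirani, hogancamp-mellit}: the correspondences are the bijections of (variants of) invariant sets used there, and one reads off the change in $\area$ and $\pdinv$. The genuinely new ingredient is $\chi(\bm P)$. Here the key observation is that $\chi(\bm P)$ depends on $\bm P$ only through the sorted list $c_1<\dots<c_N$ of north-step cells of $\bm P$ --- via the partial Schr\"oder path $S(\bm P)$ and the count $\ell=|v|$ of basement north steps, both of which are determined by that list --- so it suffices to see how $c_1<\dots<c_N$ changes under each correspondence and then to translate that change, using the definition of $\chi$ and the identities among the Carlsson--Mellit operators established in \cite{carlsson-mellit}, into the composition with $d_+$, $d_-$, or $d_{=}$ appearing in the recursion.

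I would begin with the base case and the operator-free cases. For $(\ref{initial})$ there is exactly one path tuple --- the one hugging $my=nx$ from above --- with $\area=\pdinv=\ell=0$, and a direct evaluation (or the relevant reduction among the Carlsson--Mellit operators) shows its characteristic function is $1\in V_0$. For $(\ref{bullets})$ and $(\ref{01})$, moving the leading symbol of both indexing sequences to the rear is realized on the underlying invariant sets by the cyclic shift $\Delta\mapsto\{d-1:d\in\Delta,\ d\ge 1\}$; I would check this is a bijection between the invariant sets fitting the two index pairs and that it fixes $\pdinv$, $\area$, and the list $c_1<\dots<c_N$ up to the simultaneous shift by one, which leaves $S$ and $\ell$ unchanged. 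The only points requiring care are that the boundary pair ``$d=M$'' that could spoil $\pdinv$ never occurs and that cell $M+N$ contributes nothing to $\area$, both because the leading symbol $\bullet$ (equivalently $0\in\Delta$) forces $M, N, M+N\in\Delta$. When one side of such an identity is vacuous the shift makes the other side vacuous as well.

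The content is in $(\ref{all0})$, $(\ref{00})$, $(\ref{11})$, and $(\ref{10})$. In each, the leading symbol fixes the local structure of every path tuple at cell $0$: in $(\ref{11})$ cell $0$ is simultaneously a basement north step and an east step; in $(\ref{all0})$/$(\ref{00})$ it is strictly below; in $(\ref{10})$ it is a basement north step but not an east step. Removing this feature --- or, in the $0$-cases, rotating it to the rear and promoting it to a basement step --- yields a path tuple of the type summed on the right and changes $c_1<\dots<c_N$, $\ell$, $\area$, and $\pdinv$ in ways I would compute explicitly. For $(\ref{11})$ the change adds a basement north step at the bottom of $c_1<\dots<c_N$, raising $\ell$ by one, does not move any cell into or out of the region counted by $\area$, and creates no path diagonal inversion at cell $0$; translated through $\chi$, this is composition with $d_+$ carrying no $q$- or $t$-weight. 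For $(\ref{all0})$ and $(\ref{00})$ the promotion of cell $0$ likewise lengthens $c_1<\dots<c_N$ while creating exactly $\ell$ new path diagonal inversions (one with each cell below $\bm P$ in the relevant length-$M$ window), giving $t^{-\ell}$, and through $\chi$ gives composition with $d_-$; the $(1-q)^{-1}$ in $(\ref{all0})$ is the $\ell=0$ shadow of the geometric series in the area statistic that, when $\ell>0$, is carried instead by the second term $qt^{-\ell}L(v0,w0)$ --- the sub-family of $0v,0w$-tuples for which cell $0$ cannot be promoted, so that $S$ is unchanged, $\pdinv$ still moves by $\ell$, and exactly one additional cell enters the $\area$ count. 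I would need to verify that ``promotable'' versus ``not'' is governed by a single membership condition so that the two sub-families partition the $0v,0w$-tuples.

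The step I expect to be hardest is $(\ref{10})$, $L(1v,0w)=t^{-\ell}d_{=}L(v1,w\bullet)$. Here the surgery on $c_1<\dots<c_N$ must, through $\chi$, reproduce exactly the operator $d_{=}$, and since $d_{=}$ is defined only via the commutator $\tfrac{1}{t-1}(d_-d_+-d_+d_-)$, matching a single combinatorial move to it is more delicate than in the other cases. My first attempt would be to show directly that the move is precisely the one Definition~\ref{def:schroeder} records with a diagonal step --- a pair of consecutive north-step cells at distance exactly $M$ --- with $\area$ unchanged and $\pdinv$ shifted by $\ell$, so that $d_{=}$ falls straight out of the definition of $\chi$. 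The fallback is to express the move as a horizontal step followed by a vertical step and vice versa, thereby reducing $(\ref{10})$ to $(\ref{11})$ and $(\ref{all0})$/$(\ref{00})$ and invoking the corresponding operator identity of Carlsson and Mellit. In both $(\ref{10})$ and $(\ref{01})$ I would also keep careful track of the support conditions, since there the two sides can each vanish for index-theoretic reasons and the identity must be checked --- often trivially --- in those degenerate ranges.
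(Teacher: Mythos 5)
Your plan is correct and follows essentially the same route as the paper's proof: a case-by-case decrement/shift of path tuples (equivalently invariant sets), tracking $\area$, $\pdinv$, the basement count $\ell$, and the partial Schr\"oder path, with the pair of north steps at distance exactly $M$ producing the diagonal step that yields $d_{=}$, exactly as in the paper. The only packaging difference is in the all-zeros case, where you obtain the $(1-q)^{-1}$ by resumming the $\ell=0$ instance of the $00$-case (whose second term is then self-referential), while the paper decrements $r-M-N+1$ times at once and sums the resulting geometric series in area; these are equivalent.
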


\begin{proof}
\eqref{initial} holds since there is exactly one such path -- the path corresponding to the invariant set $\mathbb{Z}_{\geq 0}$ -- and it does not have any area, path diagonal inversions, or north steps at least 0, so its characteristic function is 1. 

In \eqref{bullets}, cell 0 is above the path in $\bm{P}$ in sheet 0, so cells $M$ and $N$ must also be. Therefore cells $M-1$ and $N-1$ are above $\bm{P}'$ and the contribution of $\bm{P}'$ matches that of $\bm{P}$.

In \eqref{all0}, $0,1,\ldots,M-1$ and $0,1,\ldots,N-1$ are all strictly below $\bm{P}$. Therefore, every cell $c < M+N$ must be below $\bm{P}$. Let $r \geq M+N$ be the smallest cell above $\bm{P}$, so $r-N$ and $r-M$ are north and east steps of $\bm{P}$, respectively. Decrement every cell $r-M-N+1$ times to get a new path tuple $\bm{P}'$. $\bm{P}'$ has a north step in cell $(r-N)-(r-M-N+1) = M-1$ and an east step in cell $(r-M) - (r-M-N+1) = N-1$. Suppose $\bm{P}'$ had a north step in some cell $c < M-1$. Then $c+(r-M-N+1)$ is less than $(M-1) + (r-M-N+1) = r - N$, which has a north step in $\bm{P}$, so $c+(r-M-N+1)+N < r$ is above $\bm{P}$, which contradicts the choice of $r$. Similarly, suppose $\bm{P}'$ had an east step in some cell $d < N-1$. Since $d+(r-M-N+1)$ is less than $(N-1) + (r-M-N+1) = r-M$, which has an east step in $\bm{P}$, $d+(r-M-n+1)+M < r$ is above $\bm{P}$, which is another contradiction. Therefore $\bm{P}'$ does contribute to $L(0^{M-1}1, 0^{N-1}1)$, as desired. 

We still have to check that the $(1-q)^{-1} d_-$ factor is correct in \eqref{all0}. First, $M-1$ is a basement north step in $\bm{P}'$, whereas it was a non-basement north step in $\bm{P}$. Therefore a leading $d_-$ indeed differentiates their respective characteristic functions. Finally, we want to show that decrementing every cell by $r-M-N+1$ times lowers the area by $r-M-N$, since $r \geq M+N$. Every time we decrement, we know that $M+N$ is above the path, since if it is not we would have chosen a smaller $r$. After we decrement, the cell $M+N-1$ no longer contributes to the area statistic, since that statistic only counts cells $\geq M+N$. This argument holds until the last time we decrement, since in this case $M+N$ is already above the path before decrementing.

For the other statements, suppose $\bm{P}$ is a path tuple for sequences $iv$ and $jw$ for some symbols $i$ and $j$. $i$ and $j$ describe where the cell 0 is located with respect to the east and north steps of $\bm{P}$, respectively. Let $\bm{P}'$ be the path tuple that is obtained by replacing each content label $c$ in $\bm{P}$ with $c-1$. In other words, if $\Delta$ is the invariant set corresponding to $\bm{P}$, then $\bm{P}'$ is the path tuple whose invariant set $\Delta'$ is $\{i-1 : i \in \Delta\}$. We will show that $\bm{P}'$ is always a $vi', wj'$-path tuple for some symbols $i'$ and $j'$. Furthermore, since cells $M$ and $N$ appear directly above and to the left of cell 0, respectively, we can use $i$ and $j$ to relate the contribution of $\bm{P'}$ to $L(vi', wj')$ to the contribution of $\bm{P}$ to $L(iv, jw)$. 

In \eqref{00}, 0 is strictly below $\bm{P}$ but there is some cell $1 \leq c < M-1$ that is a north step of $\bm{P}$. The ``corner'' (east step followed by a north step) on the horizontal line on 0's north edge either appears at the upper left vertex of the 0 cell or to the left of this vertex. If it appears at the upper left vertex, then $M$ contains a north step and $N$ contains an east step. These steps become basement steps at $M-1$ and $N-1$, respectively, in $\bm{P}'$, and we get a $d_-$ in the first summand on the right-hand side of \eqref{00}. If the corner above 0 does not occur at the upper left vertex of cell 0, we do not gain any new basement steps, so $i=j=0$ and we do not see a new $d_-$ in this case. Furthermore, cell $M+N$ counts toward the area of $\bm{P}$ but not that of $\bm{P}'$. Finally, in either case, there are $\ell$ path diagonal inversions of the form $(c, M)$ in $\bm{P}$, one for each basement north step in $\bm{P}$, that are not path diagonal inversions in $\bm{P}'$, since the second coordinate is no longer at least $M$.

In \eqref{10}, 0 has a north step but not an east step. Therefore $M$ has a north step and $N$ is above $\bm{P}$. In $\bm{P}'$, $M-1$ has a north step and $N-1$ is above the path. Furthermore, there are $\ell$ path diagonal inversions of the form $(c,M)$ in $\bm{P}$ that are not path diagonal inversions in $\bm{P}'$. Finally, the partial Schr\"oder path of $\bm{P}$ has a diagonal step formed by 0 and $M$ that does not exist in the partial Schr\"oder path of $\bm{P}'$. 

In \eqref{01}, 0 has an east step but not a north step. Similarly, $M-1$ above $\bm{P}'$ and $N-1$ has an east step in $\bm{P}'$. Since $M+N$ is not below the path, the area is unchanged. Furthermore, since $M$ is not below $\bm{P}$, there are no path diagonal inversions of the form $(c, M)$, so this statistic is also unchanged by decrementing.

Finally, in \eqref{11}, 0 has a north step and an east step, which means that, in $\bm{P}'$, $M-1$ is to the left of a north step and $N-1$ is above an east step. 0 appears at the bottom of the partial Schr\"oder path for $\bm{P}$ but $-1$ cannot occur in the partial Schr\"oder path of $\bm{P}'$, which accounts for the $d_+$ in \eqref{11}.

\end{proof}

\begin{figure}
\begin{center}
\begin{tikzpicture}[scale=0.5]
\node at (0.5, 0.5) {\small$0$};
\node at (0.5, 1.5) {\small$M$};
\node at (-0.5, 0.5) {\small$N$};
\draw[very thick, dashed] (-1,0) -- (1,0) -- (1,2);
\node at (0.5,-2) {\eqref{bullets}};
\end{tikzpicture}
\hspace{20pt}
\begin{tikzpicture}[scale=0.5]
\node at (0.5,-2) {\eqref{all0}};
\node at (0.5, 0.5) {\small$0$};
\node at (0.5, 1.5) {\small$M$};
\node at (-0.5, 0.5) {\small$N$};
\draw[very thick, dashed] (-1,1) -- (0,1) -- (0,2);
\end{tikzpicture}
\hspace{20pt}
\begin{tikzpicture}[scale=0.5]
\node at (2,-2) {\eqref{00}};
\node at (0.5, 0.5) {\small$0$};
\node at (0.5, 1.5) {\small$M$};
\node at (-0.5, 0.5) {\small$N$};
\draw[very thick] (-1,1) -- (0,1) -- (0,2);
\node at (4.5, 0.5) {\small$0$};
\node at (4.5, 1.5) {\small$M$};
\node at (3.5, 0.5) {\small$N$};
\draw[very thick] (2,1) -- (3,1) -- (3,2);
\end{tikzpicture}
\begin{tikzpicture}[scale=0.5]
\node at (0.5,-2) {\eqref{10}};
\node at (0.5, 0.5) {\small$0$};
\node at (0.5, 1.5) {\small$M$};
\node at (-0.5, 0.5) {\small$N$};
\draw[very thick, dashed] (0,0) -- (0,1);
\draw[very thick] (0,1) -- (0,2);
\end{tikzpicture}
\hspace{20pt}
\begin{tikzpicture}[scale=0.5]
\node at (0.5,-2) {\eqref{01}};
\node at (0.5, 0.5) {\small$0$};
\node at (0.5, 1.5) {\small$M$};
\node at (-0.5, 0.5) {\small$N$};
\draw[very thick] (-1,1) -- (0,1);
\draw[very thick, dashed] (0,1) -- (1,1);
\end{tikzpicture}
\hspace{20pt}
\begin{tikzpicture}[scale=0.5]
\node at (0.5,-2) {\eqref{11}};
\node at (0.5, 0.5) {\small$0$};
\node at (0.5, 1.5) {\small$M$};
\node at (-0.5, 0.5) {\small$N$};
\draw[very thick, dashed] (0,0) -- (0,1) -- (1,1);
\end{tikzpicture}
\end{center}
\caption{We sketch the different possible configurations of steps near cells 0, $M$, and $N$ in a path tuple $\bm{P}$ which correspond to the different recursive cases in Theorem \ref{thm:L_recursion}.}
\label{fig:recursion_proof}
\end{figure}
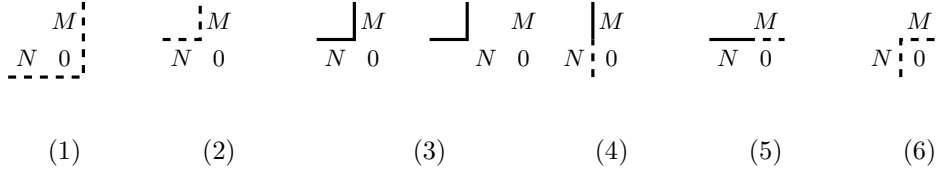

One can view Theorem \ref{thm:L_recursion} as a lift of Theorem \ref{thm:p_recursion} to the level of symmetric functions (or, more precisely, symmetric functions tensored with polynomials in variables $y_1, y_2, \ldots, y_{\ell}$). The result below shows how to recover any $p(v,w)$ from the corresponding $L(v,w)$. It is important to note that it is only at the level of $p(0^M, 0^N)$ that these objects are link invariants -- in particular, $L_{M,N}$ is \emph{not} a link invariant.

\begin{corollary}
Let $\psi$ be the operator\footnote{To experts in this area, applying $\psi$ equivalent to taking the ``Schr\"oder inner product.''} on $\Lambda$ defined by $\psi(e_i) = 1+a$ for each (algebraically independent) $e_i$. Then, for any sequences $v$ and $w$ in the alphabet $\{0,1,\bullet\}$ with $|v| = |w| = \ell$, 
$$ \psi\left( d_-^{\ell} L(v, w) \right) = p(v, w).$$
\end{corollary}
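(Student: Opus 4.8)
The plan is to prove the identity by induction on $M+N$, running in parallel with the recursion of Theorem \ref{thm:L_recursion} and comparing it term-by-term with the recursion of Theorem \ref{thm:p_recursion}. The key observation is that $\psi$, being defined on the algebraically independent generators $e_i$ and extended as a ring homomorphism to $\Lambda$, together with the operator $d_-^{\ell}$, should be compatible with each of the Carlsson--Mellit operators appearing in Theorem \ref{thm:L_recursion}. Concretely, I would first establish a set of ``transfer lemmas'' describing how the composite $f \mapsto \psi(d_-^{\ell} f)$ interacts with $d_+$, $d_-$, and $d_=$ when these are applied to an element of $V_{\ell}$ (or $V_{\ell-1}$, $V_{\ell+1}$ as appropriate), since $d_-^\ell$ always lands us in $V_0 = \Lambda$ where $\psi$ is defined. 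The target behavior is dictated by matching cases (0)--(6) of Theorem \ref{thm:p_recursion}: for instance, case \eqref{11} of Theorem \ref{thm:L_recursion} contributes a $d_+$ and passes from $\ell$ to $\ell+1$ 1's, while case (6) of Theorem \ref{thm:p_recursion} multiplies by $(t^{|v|}+a)$; so I need $\psi(d_-^{\ell+1} d_+ g) = (t^{|v|}+a)\,\psi(d_-^{\ell} g)$ for $g \in V_{\ell}$ corresponding to $L(v\bullet, w\bullet)$. Similarly, case \eqref{00} contributes $t^{-\ell} d_- + q t^{-\ell}(\cdot)$ against case (3)'s $t^{-\ell} + qt^{-\ell}$, which forces $\psi(d_-^{\ell-1} d_- f) = \psi(d_-^{\ell} f)$ — i.e. $d_-^{\ell}$ versus $d_-^{\ell-1}d_-$ is a triviality — so the $d_-$'s in \eqref{all0}, \eqref{00} cost nothing, exactly matching the absence of extra factors in (2), (3). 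Case \eqref{10} pairs $t^{-\ell} d_=$ against case (4)'s trivial transition $p(v1,w\bullet)$, and here the $\ell$ decreases by... no — it stays, so I need $\psi(d_-^{\ell} d_= h) = \psi(d_-^{\ell} h)$ for the relevant $h$, and the $t^{-\ell}$ in \eqref{10} must be cancelled by a compensating $t^{\ell}$ hidden in how $d_=$ and $d_-^{\ell}$ interact on $\psi$; sorting out these powers of $t$ is exactly the place where care is required.

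The core technical input will be the behavior of $\psi \circ d_-$ on $V_1 \to \Lambda$ and, more generally, a formula for $\psi(d_- g)$ in terms of $\psi$ applied to the ``specializations'' of $g$ at $y_1 = 0$ and at the relevant values. From the definition $d_- f = -y_\ell f[X-(t-1)y_\ell]\sum_i h_i[-X/y_\ell]\big|_{y_\ell^0}$, applying $\psi$ (which kills all of $\Lambda$ down to the substitution $e_i \mapsto 1+a$, equivalently $X \mapsto$ the alphabet with $\sum (-z)^i e_i[X] = $ relevant factor) should produce precisely the combination $t^{|v|}+a$ in the $T_i$-free situations, since the Hecke operators $T_1 \cdots T_\ell$ appearing in $d_+$ act on the $y$-variables and, after applying $d_-^{\ell+1}$ and landing in $\Lambda$, degenerate into scalars. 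I would make this precise by citing or re-deriving the key computation from Carlsson--Mellit relating iterated $d_-$'s applied after $d_+$'s to the operator $\nabla$ and to $e_n$-type expansions; the ``Schr\"oder inner product'' remark in the footnote is a strong hint that $\psi(d_-^\ell(-))$ is literally $\langle -, \text{something}\rangle$, and that inner product is known to send the relevant symmetric functions to $q,t$-Schr\"oder polynomials, whose $a \to$ specialization recovers $p$.

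The main obstacle, I expect, is bookkeeping the grading shifts — the powers of $t$ (and the single $q$) — so that each case of the $L$-recursion, after applying $\psi(d_-^\ell(-))$, reproduces exactly the coefficient in the corresponding case of the $p$-recursion, including the $(1-q)^{-1}$ in case (2)/\eqref{all0} and the $t^{-\ell}$ factors. In particular I must check that applying $d_-$ to $L(v1,w1)$ and then $d_-^{\ell-1}$ and then $\psi$ equals applying $d_-^{\ell}$ to $L(v1,w1)$ and then $\psi$ — which is immediate — while simultaneously the length parameter $\ell$ on the left of the Corollary for $(0v,0w)$ is one less than that for $(v1,w1)$, so the bookkeeping of ``$\ell$'' has to be tracked consistently between the two sides. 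A secondary subtlety is the base case \eqref{initial}: there $\ell = 0$, $d_-^0 = \mathrm{id}$, and $\psi(1) = 1 = p(\bullet^M,\bullet^N)$, which is clean. Once the transfer lemmas for $d_+, d_-, d_=$ under $\psi(d_-^\ell(-))$ are in hand with the correct scalar outputs $(t^{|v|}+a)$, $1$, and the appropriate power of $t$, the induction closes by matching Theorem \ref{thm:L_recursion}\eqref{initial}--\eqref{11} against Theorem \ref{thm:p_recursion}(0)--(6) line by line.
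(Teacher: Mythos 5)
Your overall strategy is the same as the paper's: induct along the recursion of Theorem \ref{thm:L_recursion}, apply $\psi \circ d_-^{\ell}$, and match each case against the corresponding case of Theorem \ref{thm:p_recursion}. The easy cases (\eqref{initial}, \eqref{bullets}, \eqref{all0}, \eqref{00}, \eqref{01}) you dispose of correctly, and your observation that $d_-^{\ell-1}d_- = d_-^{\ell}$ makes those cases trivial is exactly how the paper treats them. But the entire content of the corollary lives in the two ``transfer lemmas'' you only name and do not prove: that $\psi\bigl( d_-^{\ell+1} d_= h \bigr) = t^{\ell}\,\psi\bigl( d_-^{\ell+1} h \bigr)$ for the terms arising in case \eqref{10}, and that $\psi\bigl( d_-^{\ell+1} d_+ g \bigr) = (t^{\ell}+a)\,\psi\bigl( d_-^{\ell} g \bigr)$ for case \eqref{11}. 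You yourself flag that ``sorting out these powers of $t$ is exactly the place where care is required,'' and your proposed route --- arguing that the Hecke operators $T_i$ ``degenerate into scalars'' after enough $d_-$'s, or citing unspecified Carlsson--Mellit computations relating iterated $d_-$'s to $\nabla$ --- is too vague to close this: it is not a priori clear why $\psi\circ d_-^{\ell+1}$ should convert a single $d_=$ into the scalar $t^{\ell}$, or a single $d_+$ into $t^{\ell}+a$, and no identity you state implies it.

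The paper closes this gap by leaving the operator calculus entirely and passing to a combinatorial model for $\psi(\chi(S))$: using $\psi(F) = \sum_{i} \langle F, h_i e_{n-i}\rangle\, a^i$ and Haglund's results, $\psi(\chi(S))$ is written as a sum over labelings of the diagonal cells of the partial Schr\"oder path $S$ with letters $1$ and $a$, weighted by $t^{\inv(f,S)}$ and subject to constraints coming from the diagonal steps. With that formula in hand, case \eqref{10} is a bijection that prepends a $1$ to each labeling (the new $1$ creates exactly $\ell$ inversions, cancelling the $t^{-\ell}$ in \eqref{10}), and case \eqref{11} is a bijection that prepends either a $1$ (contributing $t^{\ell}$) or an $a$ (contributing $a$), producing the factor $t^{\ell}+a$ of Theorem \ref{thm:p_recursion}(6). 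Some concrete mechanism of this kind --- whether this labeling model or an honest operator identity for $\psi \circ d_-^{\ell}$ against $d_=$ and $d_+$ --- is what your write-up is missing; as it stands, the two cases that distinguish this corollary from a formal bookkeeping exercise are asserted rather than proved.
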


\begin{proof}
We work by induction in the order induced by the recursions in Theorems \ref{thm:p_recursion} and \ref{thm:L_recursion}. The recursive steps \eqref{initial} and \eqref{bullets} are directly equal. From \eqref{all0}, we have
\begin{align*}
\psi\left( L(0^M, 0^N) \right) &= \psi\left( (1-q)^{-1} d_- L(0^{M-1}1, 0^{n-1}1 \right) \\
&=  (1-q)^{-1} \psi\left( d_- L(0^{M-1}1, 0^{n-1}1 \right) \\
&= (1-q)^{-1} p(0^{M-1}1, 0^{N-1}) \\
&= p(0^M, 0^N)
\end{align*}
by induction and Theorems \ref{thm:p_recursion} and \ref{thm:L_recursion}. If $|v| = |w| = \ell > 0$, we compute
\begin{align*}
\psi\left( d_-^{\ell} L(0v, 0w) \right) &= \psi\left( t^{-\ell}d_{-}^{\ell+1} L(v1, w1) + qt^{-\ell}d_-^{\ell}L(v0, w0) \right) \\
&=  t^{-\ell} \psi\left(d_{-}^{\ell+1} L(v1, w1) \right) + qt^{-\ell} \left( d_-^{\ell}L(v0, w0) \right) \\
&= t^{-\ell} p(v1, w1) + qt^{-\ell} p(v0, w0) \\
&= p(0v, 0w). 
\end{align*}


The case in \eqref{10} is more interesting due to the presence of the $d_=$ operator. We consider
\begin{align*}
\psi \left( d_-^{\ell+1} L(1v, 0w) \right) &= \psi \left(  t^{-\ell} d_-^{\ell+1} d_= L(v1, w\bullet) \right)
\end{align*}
which we hope to show is equal to
\begin{align*}
p(1v, 0w) = p(v1, w\bullet) = \psi \left( d_{-}^{\ell+1} L(v1, w\bullet) \right).
\end{align*}
We will approach this combinatorially. We note that, for any Schr\"oder path $S$, 
$$\psi( \chi(S)) = \sum_{f} t^{\inv(f, S)} \prod f_i$$
where $f$ is any labeling of the cells on the line $y=x$ below $S$ with labels $a$ and 1 such that, for cell $c$ above $f_i$ and to the left of $f_j$, 
\begin{itemize}
\item if $c$ contains a diagonal step then $f_i = 1$,
\item if $c$ is below $S$ then $f_i$ and $f_j$ are not both $a$, and 
\item $c$ contributes to $\inv(f, S)$ exactly when $c$ is below $S$ and $f_i = 1$.
\end{itemize}
This follows from Haglund's work on shuffles and the fact that 
$$\psi(F) = \sum_{i=0}^{n} \langle F, h_i e_{n-i} \rangle a^i$$
for any symmetric function $F$ of degree $n$ \cite{haglund-book}. Let $S$ be any partial Schr\"oder path that contributes to $L(v1, w\bullet)$. Then the full Schr\"oder path $\texttt{n}^{\ell+1} S$ (where $\texttt{n}$ is a unit north step) contributes to $d_{\ell+1} L(v1, w\bullet)$ and $\texttt{n}^{\ell+1} \texttt{d} S$ contributes to $t^{-\ell} d_{-}^{\ell+1} d_= L(v1, w\bullet)$ (where $\texttt{d}$ is a diagonal step). The labelings $f$ for $\chi(\texttt{n}^{\ell+1} S)$ are in bijection with those for $\chi(\texttt{n}^{\ell+1} \texttt{d} S)$, where the bijection is simply prepending a 1. This 1 forms an inversion with exactly the next $\ell$ entries of $f$, which cancels out the $t^{-\ell}$ factor in $t^{-\ell} d_{-}^{\ell+1} d_= L(v1, w\bullet)$.

The case in \eqref{01} is straightforward. Finally, we need to understand the appearance of $d_+$ in \eqref{11}. In that case, we want to show that 
$$\psi \left( d_{-}^{\ell+1} L(1v, 1w) \right) = \psi \left( d_{-}^{\ell+1} d_+ L(v\bullet, w\bullet) \right)$$
is equal to 
$$p(1v, 1w) =  (t^{\ell} + a) p(v\bullet, w\bullet) = (t^{\ell} + a) d_{-}^{\ell} L(v\bullet, w\bullet)$$ 
where $\ell = |v| = |w|$. To get a labeling $f$ that contributes to the former equation, we simply prepend a 1 (yielding a factor of $t^{\ell}$) or an $a$ (yielding a factor of $a$) to get a labeling that contributes to the latter equation. 
\end{proof}

\section{A conjecture for $Q_{m,n}^{k}(1)$}
\label{sec:conjecture}

In this section, we state our main conjecture and explain how it generalizes previously known results.

\begin{conjecture}
\label{conj:main}
For positive integers $M$ and $N$, let $k = \gcd(M, N)$, $m = M/k$, $n=N/k$. Then
$$\mathbf{Q}_{m,n}^{k}(1) = \pm  (1-q)^k t^{C} L_{M,N}$$ 
where $C$ is the maximum of $\pdinv(\bm{P})$ over all $M,N$-path tuples $\bm{P}$. 
\end{conjecture}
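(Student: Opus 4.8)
The plan is to realize both sides of the conjectured identity inside the algebra generated by the Carlsson--Mellit operators $d_+$, $d_-$, $d_=$ acting on $\bigoplus_{\ell \geq 0} V_\ell$, and then to match them through the recursion of Theorem \ref{thm:L_recursion}. One side is already there by construction: $(1-q)^k t^C L_{M,N} = (1-q)^k t^C L(0^M,0^N)$ is a ($q$-adically convergent) sum of words in $d_+$, $d_-$, $d_=$ applied to $1$, one word per path tuple, with the word dictated by the partial Schr\"oder path of Definition \ref{def:schroeder}. For the other side I would invoke the realization of the elliptic Hall algebra operators $\mathbf{Q}_{m,n}$ --- after conjugation by suitable powers of $d_-$ and $d_+$ --- as explicit elements of this same operator algebra, built from a word read off the $m,n$-Dyck path, which is essentially what underlies Mellit's proof of the rational shuffle conjecture \cite{mellit-rational, carlsson-mellit, rational-shuffle}. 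Then $\mathbf{Q}_{m,n}^k(1)$ is an iterated ($k$-fold nested) word in $d_+, d_-, d_=$ applied to $1$, and the conjecture becomes an identity of two such expressions in $V_0 = \Lambda$.

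First I would dispatch the coprime case $k=1$ as a base point. There a path tuple is a single $m,n$-path, the $k$-sheet content bookkeeping degenerates to the ordinary content statistic on an $m,n$-Dyck path, and the partial Schr\"oder path together with its characteristic function should reduce to the combinatorial model Mellit uses \cite{mellit-rational}. Reconciling the two is then a matter of identifying $\pdinv$, $\area$, and the Schr\"oder completion with the standard rational parking-function statistics, and of checking the single factor of $(1-q)$ and the power $t^C$; the latter merely reverses the sign of the $t$-exponent, converting the $t^{-\pdinv}$ convention of this paper into the nonnegative grading natural on the symmetric-function side, which one verifies by comparing the extreme $t$-degree terms, where $\pdinv(\bm P) = C$ is attained by a distinguished tuple.

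For general $k$ the plan is to prove that $\mathbf{Q}_{m,n}^k(1)$, suitably normalized, satisfies the recursion of Theorem \ref{thm:L_recursion} at $(v,w) = (0^M, 0^N)$. Concretely, I would extend $\mathbf{Q}_{m,n}^k(1)$ to a family indexed by all admissible $\{0,1,\bullet\}$-sequences and verify the analogues of cases \eqref{initial}--\eqref{11} directly from the defining commutator relations for $\mathbf{Q}_{m,n}$, the formula for $\mathbf{D}_k$, and the relations among the $d_\pm$, $d_=$. Unfolding the $\ell = 0$ letters through case \eqref{all0} --- which must fire $k$ times in the reduction of $(0^M,0^N)$, exactly as it does for $p(0^M,0^N)$ in Theorem \ref{thm:p_recursion} --- produces the $k$ factors of $(1-q)^{-1}$ that the normalization $(1-q)^k$ cancels, and the $d_-$, $d_=$, $d_+$ in the other cases should then be matched term by term against the relations that govern how the nested copies interact. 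The geometric ``increment every content by one'' move used throughout the proof of Theorem \ref{thm:L_recursion} is precisely the shadow of these relations, with the $k$-sheet content function $i + My - Nx$ on sheet $i$ encoding the nesting.

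I expect the main obstacle to be exactly this passage from $k=1$ to $k \geq 2$: no single elliptic Hall algebra generator ``sees'' the $k$-sheet structure of $\left(\mathbb{Z}^2\right)^k$, so one must genuinely use that $\mathbf{Q}_{m,n}^k$ is an iterated composition, feeding the straightened word for one copy into the next and reconciling the output with the content labeling. The hard technical heart is showing that straightening the $k$-fold nested word against the algebra relations produces a sum indexed by precisely the $(0^M,0^N)$-path tuples of Section \ref{sec:combinatorics}, faithfully reproducing the three-case rule of Definition \ref{def:schroeder} --- the splitting into below-$S$ squares, diagonal steps, and above-$S$ squares according to whether $c_i < c_j < c_i + M$, $c_i + M = c_j$, or neither --- from the commutation relations alone. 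A secondary point needing care is the convergence of the infinite sum defining $L(0^M,0^N)$ and its identification with the $h_n[(1-qt)X/(qt)]$-type series that surfaces when creation-type operators $\mathbf{Q}_{0,\cdot}$ are produced during straightening; this should be handled in the same $q$-adic completion that makes the recursion's $(1-q)^{-1}$ factors meaningful. As a consistency check already available, applying the operator $\psi$ of the Corollary above to both sides of the conjecture recovers a specialization statement for the Khovanov--Rozansky homology of the $M,N$-torus link --- which for $k=1$ is Gorsky's theorem \cite{gorsky} --- so any proof of the conjecture must in particular refine that identity.
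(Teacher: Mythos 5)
This statement is Conjecture \ref{conj:main}, which the paper does not prove; it is left open, and the paper's own concluding remarks already propose exactly the strategy you outline (``mimic this recursion to define and study extensions of the operators $\mathbf{Q}_{m,n}$ to sequences $v$ and $w$,'' in the spirit of Mellit's proof of the Rational Shuffle Theorem). So your proposal is not a proof but a research plan, and its load-bearing step is precisely the open problem. Concretely: you say you would ``extend $\mathbf{Q}_{m,n}^k(1)$ to a family indexed by all admissible $\{0,1,\bullet\}$-sequences and verify the analogues of cases \eqref{initial}--\eqref{11} directly from the defining commutator relations,'' but you give no definition of this extended family and no argument for any of the recursive cases. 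Even for $k=1$ this is the entire content of Mellit's theorem, which required new machinery beyond the commutator presentation of $\mathbf{Q}_{m,n}$; for $k\geq 2$ nothing analogous is known, and your own text flags the ``hard technical heart'' (straightening the $k$-fold nested word into a sum over $(0^M,0^N)$-path tuples reproducing Definition \ref{def:schroeder}) without resolving it. Asserting that the geometric decrement move in the proof of Theorem \ref{thm:L_recursion} ``is precisely the shadow of these relations'' is a hope, not an argument.

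Two further gaps: first, even granting an extended family satisfying the recursion, you would need a uniqueness statement --- that the recursion of Theorem \ref{thm:L_recursion}, with its initial condition, determines the family in the appropriate completion (case \eqref{00} does not reduce length, so termination/convergence in the $(1-q)^{-1}$, $q$-adic sense must be established before ``both sides satisfy the same recursion'' implies equality). Second, the normalization is not handled: the paper explicitly states that no simple formula is known for $C$ or for the sign, and your suggestion to fix them ``by comparing extreme $t$-degree terms'' presupposes identifying the extremal path tuple and the top $t$-degree of $\mathbf{Q}_{m,n}^k(1)$, neither of which you establish. The $k=1$ base case is indeed expected to follow from the Rational Shuffle Theorem (the paper sketches this agreement in Section \ref{ssec:rational}), and your $\psi$-specialization consistency check is sound, but neither closes the $k\geq 2$ case, which is where the conjecture actually lives.
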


\noindent
We currently do not know of a simple way to compute the value $C$ in Conjecture \ref{conj:main}, nor do we know a simple rule for the sign that appears.

\subsection{The Rational Shuffle Theorem}
\label{ssec:rational}

When $M$ and $N$ are coprime (so $M=m$, $N=n$, and $k=1$), the Rational Shuffle Theorem gives a combinatorial expression for $\mathbf{Q}_{m,n} (1)$.  We check that our formulas agree. Since $k=1$, there is only one sheet in each path tuple. We can decrement each path tuple until $M+N$ is the smallest value above the path, i.e.\ $M$ is the lowest-content north step and $N$ is the lowest-content east step. We obtain the usual depiction of rational parking functions by viewing such a path tuple in the horizontal band $1 < y \leq n+1$. 

\subsection{The $M=N$ case}
\label{ssec:M=N}

If $M=N=k$, then sheet $i$ is a single row containing all the cells equivalent to $i$ modulo $k$. We choose one cell as a north and east step in every sheet. This simplifies to the ``unbounded columns'' that appear in an open conjecture for $Q_{1,1}^{k} (1) = \nabla p_{1^k}$ \cite{torus-link-wilson}. 

\subsection{Conclusion}

A reasonable approach to proving Conjecture \ref{conj:main} would be to mimic this recursion to define and study extensions of the operators $\mathbf{Q}_{m,n}$ to sequences $v$ and $w$. A similar approach was used by Mellit to prove the Rational Shuffle Theorem \cite{mellit-rational}. An alternative approach would be to try to take advantage of the powerful ``Cauchy identity'' for non-symmetric Hall-Littlewood polynomials, following Blasiak, Haiman, Morse, Pun, and Seelinger \cite{blasiak-shuffle}. 

It would also be interesting to investigate whether $L(v, w)$ has a precise geometric or topological meaning in particular cases. For example, Hogancamp and Mellit show that $p(1^k 0^{m(k-1)}, 1^{k} 0^{n(k-1)})$ is related to colored homology of certain torus links \cite{hogancamp-mellit}.



\bibliographystyle{alpha}
\bibliography{statistics}

\end{document}